%2multibyte Version: 5.50.0.2890 CodePage: 936
\documentclass[10pt,reqno,a4paper,oneside]{amsart}%
\usepackage{amsfonts}
\usepackage{amssymb}
\usepackage{amsmath}
\usepackage{amsthm}
\usepackage{epsfig}
\usepackage{float}
\usepackage{epstopdf}
\usepackage{graphicx}%
\setcounter{MaxMatrixCols}{30}
%TCIDATA{OutputFilter=latex2.dll}
%TCIDATA{Version=5.50.0.2890}
%TCIDATA{Codepage=936}
%TCIDATA{TCIstyle=Article/art1.lat,amsart,amsart}
%TCIDATA{LastRevised=Monday, January 15, 2007 14:56:25}
%TCIDATA{<META NAME="GraphicsSave" CONTENT="32">}
%TCIDATA{<META NAME="SaveForMode" CONTENT="3">}
%TCIDATA{BibliographyScheme=Manual}
%TCIDATA{Language=American English}
%TCIDATA{ComputeDefs=
%$H=\left\{  a_{0}\right\}  $
%$M^{{}}$
%}
%BeginMSIPreambleData

%DDEOpen("%$('PDF-View')","%$('Acro-DDE_Service');","%$('Acro-DDE_Topic');");
\providecommand{\U}[1]{\protect \rule{.1in}{.1in}}
%EndMSIPreambleData
\oddsidemargin 10pt \evensidemargin 0pt \textwidth 160mm
\textheight 230mm
\newtheorem{theorem}{Theorem}[section]

\newtheorem{lemma}[theorem]{Lemma}

\newtheorem{remark}{Remark}[section]
\newtheorem{problem}{Problem}
\newtheorem{example}{Example}[section]
\newtheorem{algorithm}[theorem]{Algorithm}

\theoremstyle{definition}
\theoremstyle{problem}
\theoremstyle{remark}
\numberwithin{equation}{section}

%BeginMSIPreambleData
\ifx \pdfoutput \relax \let \pdfoutput=\undefined \fi
\newcount \msipdfoutput
\ifx \pdfoutput \undefined \else
\ifcase \pdfoutput \else
\msipdfoutput=1
\ifx \paperwidth \undefined \else
\ifdim \paperheight=0pt\relax \else \pdfpageheight \paperheight \fi
\ifdim \paperwidth=0pt\relax \else \pdfpagewidth \paperwidth \fi
\fi \fi \fi
%EndMSIPreambleData

\begin{document}
%
%TCIMACRO{\TeXButton{pagestyle}{\pagestyle{myheadings}}}%
%BeginExpansion
\pagestyle{myheadings}%

\begin{center}
{\huge \textbf{Generalized low rank approximation to the symmetric positive semidefinite matrix}}\footnote{This research was
supported by the Natural Science Foundation of China (11601328), and the research fund of Shanghai Lixin University of Accounting and Finance (AW-22-2201-00118).
\par
{$^{*}$corresponding author}

\par
{$^{*}$ E-mail: hcychang@163.com
}}

\bigskip

{\large \textbf{Haixia Chang}$^{a,*}$}\bigskip \,\,\,\,    {\large \textbf{Chunmei Li}$^{b}$}\bigskip \,\,\,\,    {\large \textbf{Qionghui Huang}$^{b}$}

$^{a}$School of Statistics and Mathematics, Shanghai Lixin University of Accounting and Finance,\newline
Shanghai 201209, P.R. China\\

$^{b}$School of Mathematics and Computational Science, Guilin University of Electronic Technology, \\Guilin 541004, P.R. China\\

\end{center}

\bigskip

\begin{quotation}
\textbf{Abstract}
In this paper, we investigate the generalized low rank approximation to the symmetric positive semidefinite
matrix in the Frobenius norm:
$$\underset{ rank(X)\leq k}{\min} \sum^{m}_{i=1}\left \Vert
A_{i}-B_{i}XB_{i}^{T}\right \Vert^{2}_{F} ,$$
where $X$ is an unknown symmetric positive semidefinite matrix and $k$ is a positive integer.
%$$\underset{ X\geq 0, rank(X)\leq k}{\min} \sum^{m}_{i=1}\left \Vert
%A_{i}-B_{i}XB_{i}^{*}\right \Vert^{2}_{F} .$$
We firstly use the property of a symmetric positive semidefinite matrix
$X=YY^{T}$, $Y$ with order $n\times k$, to convert the generalized low rank approximation into unconstraint generalized optimization problem.
Then we apply the nonlinear conjugate gradient method
%with exact line search%
 to solve the generalized optimization problem. We give a numerical example and an application in compressing and restoring of a color image.
%% The numerical example shows that the algorithm is effective.
 \newline
\textbf{Keywords} Generalized low rank approximation; Symmetric positive semidefinite matrix; Generalized optimization; Nonlinear conjugate gradient method
\newline
\textbf{2000 AMS Subject Classifications\ }{\small 68W25, 65K10,15A33,
15A57}\newline
\end{quotation}

\section{\textbf{Introduction}}

Throughout this paper, let $\mathbb{R}$ and $\mathbb{R}^{m\times n}$ denote, respectively, the real number field  and the set of all real $m\times n$ matrices.
For a real or complex matrix $A$, the symbols $A^{T}$, $A^{*}$,
$r(A)$, $tr(A)$, $\Vert A \Vert_{F} $, $\Vert A \Vert_{2} $, $\Vert A \Vert$
stand for the transpose, the conjugate transpose, the rank, the trace, the Frobenius norm, the spectral norm, and any unitarily invariant norm of $A$, respectively.
%We know $\Vert A\Vert=tr(A^{T}A)$.%
 We write $A\geq 0$ if $A$ is a real symmetric positive semidefinite matrix.

%In the literature, low rank matrix approximation or constrained low rank matrix approximations have been widely studied; see %\cite{Duan2014}-\cite{Zhang2003} ;

In the last few years, the low rank matrix approximation and its generalizations have been one of the topics of very active research in matrix theory and the applications. The original low rank matrix approximation is due to the
 %Schmidt \cite{Schmidt} in 1907 and
 Eckart-Young theorem \cite{Young} in 1936, which was described as approximating one matrix by another
of lower rank is closely in distance and gave a constructive solution.
In 1960, Mirsky \cite{Mirsky} studied the problem
$$\Vert
A-\widehat{X} \Vert=\underset{ r(X)= k}{\min}\left \Vert
A-X\right \Vert, $$
and obtained the solution by the singular value decomposition(SVD).
In 1987, Golub, et al. \cite{Golub} gave a generalization of
the Eckart-Young-Mirsky matrix approximation theorem,
\begin{equation}\label{Golub}
\|(X_1,\widehat{X}_2)-(X_1,X_2)\|=\underset{ r(X_1,\overline{X}_2)\leq k}{\min}\|(X_1,\overline{X}_2)-(X_1,X_2)\|,
\end{equation}
which the columns of the initial matrix $X_1$ remains fixed.
 By (\ref{Golub}), the low rank approximation of initial matrix with some specified structure is called as the structured low rank matrix approximation, which can be written as
\begin{equation}\label{Su1}
\underset{ r(X)\leq k,X\in \Omega}{\min}\left\Vert
A-X\right\Vert,
\end{equation}
where $\Omega$ is a structure matrix set. For the complete survey of (\ref{Su1}), see \cite{CHU2003}, \cite{SLRA1}, \cite{SLRA2}. For the different matrix sets of $\Omega$ in (\ref{Su1}), such as the symmetric matrix \cite{7} , symmetric nonnegative definite matrix in \cite{Duan2014} \cite{2}, \cite{3}, \cite{12}, correlation matrix \cite{25} \cite{Duancorrelation}, Hankle matrix \cite{21}, circulate matrix \cite{13}, sylvester matrix in \cite{16}, \cite{23}, \cite{24}, and so on.
%\cite{Macomp}, \cite{Horn};

For the generalized forms of structured low rank approximation, there are some results, e.g.,
both X. Zhang etc \cite{Zhang2003} in 2003 and Wei etc \cite{Wei2007} in 2007 studied the fixed rank Hermitian nonnegative definite solution $X$ to the following fixed rank matrix approximation least squares problem
$$\underset{ r(X)= k}{\min} \left \Vert
A-BXB^{*}\right \Vert_{F}, $$
which discussed the ranges of the rank $k$ and derived expressions of the solutions by applying the SVD of the matrix of $B$.
In 2007, Friedland and Torokhti \cite{Friedland} considered
$$\underset{ r(X)\leq k}{\min}\left \Vert
A-BXC\right \Vert_{F}, $$
and applied SVD of matrices to give the explicit solution.
For the low rank approximation in the spectral norm,  the authors in \cite{Wei2012} and \cite{Shen2015} applied the norm-preserving dilation theorem and the matrix decomposition to obtain the explicit expression of the solution.

%Recently, findings in the research of rank matrix approximation problems have been so widely applied to signal processing,
%control theory, numerical algebra, and so on [1每5]. For example, Wei and Wang [6] derived a rank-k Hermitian
%nonnegative definite least squares solution to the equation BXBH = A in the Frobenius norm and discussed the ranges of
%the rank k. Wang [7] considered the following rank constrained matrix approximation problem in the Frobenius norm. Sou
%and Rantzer [5], and Wei and Shen [8] studied minimum rank matrix approximation problems in the spectral norm and applied
%their findings to control theory. Especially, Wei and Shen [9] studied the minimum rank Hermitian (skew-Hermitian)
%solutions to the following problems in the spectral norm

Motivated by the work mentioned above and keeping applications and interests
of low rank approximation in view, we in this paper consider
the generalized low rank approximation problem of the symmetric positive semidefinite matrix. The problem can be expressed as follows.

\begin{problem}
Given matrices $A_{i}\in R^{m_{i}\times m_{i}}$, $B_{i}\in R^{m_{i}\times n}$, $i=1,2,\ldots,m$, and an integer $k$,
 find an $n\times n$ real symmetric
positive semidefinite matrix $\widetilde{X}$ with $r(\widetilde{X})\leq k$ such that
 $$\sum^{m}_{i=1}\left \Vert
A_{i}-B_{i}\widetilde{X}B_{i}^{T}\right \Vert_{F}^{2}=\underset{X\geq 0, r(X)\leq k}{\min}\sum^{m}_{i=1}\left \Vert
A_{i}-B_{i}XB_{i}^{T}\right \Vert_{F}^{2} .$$
\end{problem}

The paper is organized as follows. We firstly use the property of a symmetric positive semidefinite matrix
$X=YY^{T}$, $Y\in R^{n\times k}$ to convert the generalized low rank approximation into unconstraint generalized optimization problem.
Then we apply the nonlinear conjugate gradient method with exact line search to solve the generalized optimization problem. Finally, we give a numerical example and an application to illustrate that the algorithm is effective.

\section{\textbf{Main results}}

In this section, we first characterize the feasible set, then transform Problem 1 into an unconstrained optimization problem. Finally we apply the nonlinear conjugate gradient method to solve it.

\begin{lemma}
(See \cite{FuzhenZhang}) An $n\times n$ matrix $X$ is real symmetric
positive semidefinite with $r(X)\leq k$ if and only if it can be written as $X=YY^{T}$, where $Y\in R^{n\times k}$.
\end{lemma}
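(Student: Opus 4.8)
The statement is a biconditional, so the plan is to establish each implication separately. The ``if'' direction is the routine one: assuming $X=YY^{T}$ with $Y\in R^{n\times k}$, I would verify the three required properties by direct computation. Symmetry follows from $X^{T}=(YY^{T})^{T}=YY^{T}=X$; positive semidefiniteness follows because for every $v\in R^{n}$ we have $v^{T}Xv=v^{T}YY^{T}v=\|Y^{T}v\|_{2}^{2}\geq 0$; and the rank bound follows from the identity $r(YY^{T})=r(Y)\leq k$, since $Y$ has only $k$ columns.

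For the ``only if'' direction the key tool is the spectral theorem. Given that $X$ is real symmetric, I would diagonalize it as $X=Q\Lambda Q^{T}$ with $Q$ orthogonal and $\Lambda=\mathrm{diag}(\lambda_{1},\dots,\lambda_{n})$. The hypothesis $X\geq 0$ guarantees all $\lambda_{i}\geq 0$, so the diagonal square root $\Lambda^{1/2}=\mathrm{diag}(\sqrt{\lambda_{1}},\dots,\sqrt{\lambda_{n}})$ is real, giving $X=(Q\Lambda^{1/2})(Q\Lambda^{1/2})^{T}$. The hypothesis $r(X)\leq k$ means that at most $k$ of the $\lambda_{i}$ are nonzero, so after reordering the eigenvalues the matrix $Q\Lambda^{1/2}$ has at most $k$ nonzero columns; discarding the zero columns and, if necessary, re-padding with zero columns produces the desired factor $Y\in R^{n\times k}$ with $X=YY^{T}$.

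I do not expect a genuine obstacle here, since both implications are standard linear algebra. The only point requiring a little care is the bookkeeping in the forward direction: when $r(X)=r<k$ one must confirm that appending $k-r$ zero columns to an $n\times r$ factor leaves the product $YY^{T}$ unchanged, so that the column count can always be made exactly $k$ rather than only the true rank $r$. This, together with the standard fact $r(YY^{T})=r(Y)$ invoked in the other direction, constitutes the full content of the proof.
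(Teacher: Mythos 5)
Your proof is correct and complete: both implications are handled properly, including the two details that actually need care (the identity $r(YY^{T})=r(Y)$, which follows since $YY^{T}v=0$ forces $\|Y^{T}v\|_{2}^{2}=v^{T}YY^{T}v=0$, and the zero-column padding when $r(X)<k$). Note that the paper itself offers no proof of this lemma---it is quoted from Zhang's \emph{Matrix Theory} text---so there is nothing to compare against; your spectral-theorem factorization $X=Q\Lambda Q^{T}=(Q\Lambda^{1/2})(Q\Lambda^{1/2})^{T}$ is precisely the standard argument such a reference would give.
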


The properties of the trace of a matrix can be referred arbitrary linear algebra book, e.g., \cite{FuzhenZhang}.

\begin{lemma} The matrices $A$, $B$, $C$, and $D$ with appropriate sizes, then
\begin{align}
tr(A^{T})&=tr(A),\nonumber\\
 tr(A+B)&=tr(A)+tr(B), \nonumber\\
 tr(AC)&=tr(CA), \nonumber\\
 tr(ACD)&=tr(CDA)=tr(DAC),\nonumber\\
 \Vert A\Vert_{F}^{2}&=tr(A^{T}A).\nonumber
\end{align}
\end{lemma}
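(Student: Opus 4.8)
The plan is to reduce every identity to the elementary definition of the trace as the sum of diagonal entries, $tr(A)=\sum_{i}A_{ii}$, and then manipulate indexed sums. First I would dispatch the two linear identities. Since transposition leaves the diagonal fixed, $(A^{T})_{ii}=A_{ii}$, so summing over $i$ gives $tr(A^{T})=tr(A)$ immediately. Likewise $(A+B)_{ii}=A_{ii}+B_{ii}$, and summing over $i$ together with the linearity of finite sums yields $tr(A+B)=tr(A)+tr(B)$.

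The crux of the lemma is the cyclic identity $tr(AC)=tr(CA)$; every remaining statement is a consequence of it. I would write $tr(AC)=\sum_{i}(AC)_{ii}=\sum_{i}\sum_{j}A_{ij}C_{ji}$, and symmetrically $tr(CA)=\sum_{j}(CA)_{jj}=\sum_{j}\sum_{i}C_{ji}A_{ij}$. Because these are finite sums over the same index set, I may freely interchange the order of summation; the two double sums then coincide term by term, giving the equality. The only point requiring care is that the sizes of $A$ and $C$ must be compatible so that both $AC$ and $CA$ are square, which is guaranteed by the standing hypothesis that the matrices have appropriate sizes.

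For the triple-product identity I would simply iterate the cyclic rule, grouping the factors into blocks. Viewing $CD$ as a single matrix and applying $tr(XY)=tr(YX)$ with $X=A$ and $Y=CD$ gives $tr(ACD)=tr(CDA)$; applying it once more with $X=C$ and $Y=DA$ gives $tr(CDA)=tr(DAC)$. This establishes the chain $tr(ACD)=tr(CDA)=tr(DAC)$ without any new computation.

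Finally, for the Frobenius identity I would compute the diagonal of $A^{T}A$ directly: $(A^{T}A)_{jj}=\sum_{i}(A^{T})_{ji}A_{ij}=\sum_{i}A_{ij}^{2}$, so that $tr(A^{T}A)=\sum_{j}\sum_{i}A_{ij}^{2}$, which is exactly the defining expression $\sum_{i,j}A_{ij}^{2}=\Vert A\Vert_{F}^{2}$. I do not expect any genuine obstacle here: the entire lemma is a bookkeeping exercise in index manipulation, and the single substantive step is the interchange of summation order used to establish the cyclic property, from which everything else follows formally.
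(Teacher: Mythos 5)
Your proposal is correct in every step. Note, however, that the paper does not prove this lemma at all: it simply defers to a standard linear algebra reference (Zhang's \emph{Matrix Theory}), so there is no proof in the paper to compare against. Your argument — reducing everything to $tr(A)=\sum_i A_{ii}$, establishing the cyclic identity $tr(AC)=tr(CA)$ by interchanging finite sums, deriving the three-factor cyclic chain by grouping factors, and computing $tr(A^TA)=\sum_{i,j}A_{ij}^2$ directly — is exactly the standard textbook proof that such a reference would contain, and it fills the omission cleanly.
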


\begin{lemma}(See \cite{Cookbook})
 Let $A,B,C$ be the constant matrices with appropriate size, and $X$, $Y$ be variable matrices. Then there are the following rules about deriving the differential of the matrix expressions:
\begin{align}
\partial A& =0,  \nonumber \\
\partial(X+Y)& =\partial(X)+\partial(Y),\nonumber\\
\partial(tr(X))& =tr(\partial(X)),\nonumber\\
\partial(XY)& =\partial(X)Y+X\partial(Y),\nonumber\\
\frac{\partial}{\partial X}tr(X^{T}BX)& =BX+B^{T}X,\nonumber\\
%\frac{\partial}{\partial X}tr(BXX^{T})& =BX+B^{T}X,\nonumber\\
%\frac{\partial}{\partial X}tr(XX^{T}B)& =BX+B^{T}X,\nonumber\\
\frac{\partial}{\partial X}tr(XBX^{T})& =XB^{T}+XB,\nonumber\\
\frac{\partial}{\partial X}tr(AXBX)& =A^{T}X^{T}B^{T}+B^{T}X^{T}A^{T},\nonumber\\
\frac{\partial}{\partial X}tr(B^{T}X^{T}CXX^{T}CXB)& = CXX^{T}CXBB^{T}+C^{T}XBB^{T}X^{T}C^{T}X \nonumber\\
&\,\,\,\,\,\,\,+CXBB^{T}X^{T}CX+C^{T}XX^{T}C^{T}XBB^{T}.\nonumber
\end{align}

\end{lemma}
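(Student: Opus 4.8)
The plan is to reduce everything to the standard differential calculus of scalar functions of a matrix. The first four displayed rules are just the defining properties of the differential operator $\partial$: constants have zero differential, $\partial$ is linear, it commutes with the trace (the trace being a fixed linear combination of entries), and it obeys the Leibniz product rule. I would verify each directly at the level of entries — for instance $\partial(XY)_{ij}=\partial\!\sum_{k}X_{ik}Y_{kj}=\sum_{k}(\partial X_{ik})Y_{kj}+\sum_{k}X_{ik}(\partial Y_{kj})$ gives the product rule, and the others are even more immediate — or simply invoke them as the elementary rules of matrix differentials. The one extra ingredient needed is the bridge between a differential and a gradient: for a scalar function $f(X)$ one has $\partial f=\sum_{ij}\frac{\partial f}{\partial X_{ij}}\,\partial X_{ij}=tr\big(G^{T}\,\partial X\big)$, where $G$ is the matrix with entries $G_{ij}=\partial f/\partial X_{ij}$. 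Hence the whole task becomes: manipulate $\partial f$ into the canonical form $tr(M\,\partial X)$, after which $\frac{\partial}{\partial X}f=M^{T}$.

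Given this, each gradient identity follows by the same recipe. First apply the product rule to expand $\partial f$, dropping every term in which $\partial$ lands on a constant matrix ($A$, $B$, or $C$). Each surviving term is the trace of a product containing exactly one factor $\partial X$ or $\partial X^{T}$; I then use $tr(P)=tr(P^{T})$ to turn every $\partial X^{T}$ into a $\partial X$, and the cyclic invariance $tr(PQ)=tr(QP)$ of the trace to rotate that $\partial X$ to the far right, producing the form $tr(M\,\partial X)$. Reading off $M^{T}$ gives the stated gradient. For instance, for $tr(X^{T}BX)$ the expansion is $tr(\partial X^{T}\,BX)+tr(X^{T}B\,\partial X)$; transposing the first trace gives $tr(X^{T}B^{T}\,\partial X)$, so $M=X^{T}(B^{T}+B)$ and $M^{T}=(B+B^{T})X=BX+B^{T}X$, exactly as claimed. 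The formulas for $tr(XBX^{T})$ and $tr(AXBX)$ come out the same way after one or two applications of the cyclic and transpose identities.

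The only genuinely laborious case — and the step I expect to be the main obstacle — is the last identity, for $f=tr(B^{T}X^{T}CXX^{T}CXB)$, because the product rule now distributes $\partial$ over the \emph{four} non-constant factors $X^{T}, X, X^{T}, X$ and so produces four trace terms. Each is a long product of seven or eight factors, and for the two terms carrying $\partial X^{T}$ one must transpose the entire product (reversing the order of all factors and sending $C\mapsto C^{T}$) before cycling $\partial X$ to the end. The bookkeeping of which factors get transposed, and in which order, is where errors creep in; one must check that the four resulting matrices assemble to precisely $CXX^{T}CXBB^{T}+C^{T}XBB^{T}X^{T}C^{T}X+CXBB^{T}X^{T}CX+C^{T}XX^{T}C^{T}XBB^{T}$. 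Since the whole statement is quoted from \cite{Cookbook}, I would either present this single verification in full detail or simply cite it, while proving the quadratic cases completely as a warm-up to pin down the sign and transpose conventions used throughout the rest of the paper.
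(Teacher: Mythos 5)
Your proposal is correct, but there is essentially nothing in the paper to compare it against: the paper states this lemma with the tag ``(See \cite{Cookbook})'' and gives no proof at all, treating all eight rules as quoted facts from the Matrix Cookbook. Your derivation therefore supplies a genuine proof where the paper offers only a citation. The route you choose --- establish the four differential rules entrywise, use the bridge $\partial f = tr(G^{T}\partial X)$, and reduce each gradient identity to the canonical form $tr(M\,\partial X)$ with gradient $M^{T}$ --- is the standard machinery (and is in fact how the cited Cookbook organizes such computations), and it does go through on the fourth-order case you flagged as the main obstacle: the product rule gives four traces, and after transposing the two containing $\partial X^{T}$ and cycling $\partial X$ to the right one obtains $M_{1}=BB^{T}X^{T}C^{T}XX^{T}C^{T}$, $M_{2}=X^{T}CXBB^{T}X^{T}C$, $M_{3}=X^{T}C^{T}XBB^{T}X^{T}C^{T}$, $M_{4}=BB^{T}X^{T}CXX^{T}C$, whose transposes are exactly the four stated terms $CXX^{T}CXBB^{T}$, $C^{T}XBB^{T}X^{T}C^{T}X$, $CXBB^{T}X^{T}CX$, $C^{T}XX^{T}C^{T}XBB^{T}$. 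So you can resolve your closing hedge (``present in full detail or simply cite it'') in favor of the full verification; nothing fails. One point worth making explicit if you write this up: the identification of the gradient as $M^{T}$ from $tr(M\,\partial X)$ encodes a particular layout convention, and it is the convention the paper implicitly uses when it applies this lemma in the proof of Theorem 2.4, so your proof is consistent with how the lemma is used downstream.
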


By Lemma 2.1, we write $X=YY^{T}$ in Problem 1, where $Y\in R^{n\times k}$. The Problem 1 can be as follows.

\begin{problem}
Given matrices $A_{i}\in R^{m_{i}\times m_{i}}$, $B_{i}\in R^{m_{i}\times n}$, $i=1,2,\ldots,m$, and an integer $k$,
 find a matrix $\widetilde{Y}\in R^{n\times k}$ such that
 $$\sum^{m}_{i=1}\left \Vert
A_{i}-B_{i}\widetilde{Y}\widetilde{Y}^{T}B_{i}^{T}\right \Vert^{2}=\underset{Y\in R^{n\times k}}{\min}\sum^{m}_{i=1}\left \Vert
A_{i}-B_{i}YY^{T}B_{i}^{T}\right \Vert^{2} .$$
\end{problem}

We find Problem 2 is an unconstrained nonlinear matrix optimization.
Assume the function
\begin{equation}
f(Y)=\sum^{m}_{i=1}\left \Vert
A_{i}-B_{i}YY^{T}B_{i}^{T}\right \Vert_{F}^{2}, \,\,\,\,\,\,Y\in R^{n\times k}, \label{fy}%
\end{equation}
which defines a map $R^{n\times k}\rightarrow R$. It is easy to verify that Problem 2 is equivalent to the following problem
\begin{equation}
\underset{Y\in R^{n\times k}}{\min}f(Y).   \label{mfy}
\end{equation}

\begin{theorem}
The gradient of the objective function $f(Y)$ in (\ref{fy}) is
\begin{equation}
\bigtriangledown f(Y)=\sum^{m}_{i=1}(4 B_{i}^{T} B_{i}YY^{T} B_{i}^{T}B_{i}Y-2B_{i}^{T}A_{i}B_{i}Y-2B_{i}^{T}A_{i}^{T}B_{i}Y)   \label{grad}
\end{equation}
\end{theorem}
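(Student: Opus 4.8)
The plan is to expand each Frobenius norm through the identity $\Vert A\Vert_F^2=tr(A^TA)$ from Lemma 2.2, differentiate the resulting trace expression summand by summand using the rules of Lemma 2.3, and then reassemble. Because differentiation is linear and the sum over $i$ is finite, it suffices to treat one index $i$ and sum at the end. Writing $M_i=A_i-B_iYY^TB_i^T$, I would first record that $B_iYY^TB_i^T$ is symmetric, so $(B_iYY^TB_i^T)^T=B_iYY^TB_i^T$; this is what will later let me merge the two cross terms.

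First I would expand, using linearity of the trace and the symmetry just noted,
$$\Vert M_i\Vert_F^2=tr(A_i^TA_i)-tr(A_i^TB_iYY^TB_i^T)-tr(B_iYY^TB_i^TA_i)+tr(B_iYY^TB_i^TB_iYY^TB_i^T).$$
The first term is constant in $Y$, so the rule $\partial A=0$ kills its gradient. Next I would dispatch the two cross terms by using the cyclic property $tr(ACD)=tr(CDA)$ of Lemma 2.2 to rewrite them in the canonical shape $tr(Y^TCY)$: one checks $tr(A_i^TB_iYY^TB_i^T)=tr(Y^T(B_i^TA_i^TB_i)Y)$ and $tr(B_iYY^TB_i^TA_i)=tr(Y^T(B_i^TA_iB_i)Y)$. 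Applying the rule $\frac{\partial}{\partial Y}tr(Y^TBY)=BY+B^TY$ to each and adding, the transposed pieces pair up and the two cross terms contribute exactly $-2B_i^TA_iB_iY-2B_i^TA_i^TB_iY$, which are the two linear terms of (\ref{grad}).

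The quartic term is where the real effort lies and is the step I expect to be the main obstacle. The strategy is to recognize $tr(B_iYY^TB_i^TB_iYY^TB_i^T)$ as an instance of the last differentiation rule of Lemma 2.3, namely $\frac{\partial}{\partial X}tr(B^TX^TCXX^TCXB)$, specialized to $X=Y$, $C=B_i^TB_i$, and $B=I$. A cyclic rotation of the trace gives $tr(B_iYY^TB_i^TB_iYY^TB_i^T)=tr(Y^T(B_i^TB_i)YY^T(B_i^TB_i)Y)$, which matches that rule with $BB^T=I$. Since $C=B_i^TB_i$ is symmetric we have $C^T=C$, so the four terms produced by the rule collapse in pairs to $4B_i^TB_iYY^TB_i^TB_iY$.

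The delicate points are verifying this cyclic rearrangement and tracking the transposes carefully so that the simplification $C^T=C$ is invoked only where it is legitimate; a sign or transpose slip here is the easiest way to miss the factor $4$. Once the constant, cross, and quartic contributions are in hand, summing over $i=1,\dots,m$ reproduces (\ref{grad}) verbatim.
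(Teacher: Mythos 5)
Your proposal is correct and follows essentially the same route as the paper's proof: expand each squared Frobenius norm via $tr(A^TA)$, cyclically rearrange the cross and quartic terms into the canonical forms $tr(Y^TCY)$ and $tr(Y^T B_i^TB_i YY^T B_i^TB_i Y)$, apply the differentiation rules of Lemma 2.3 (the quadratic rule for the cross terms and the last rule, specialized to $B=I$, $C=B_i^TB_i$, for the quartic term), and sum over $i$. Your treatment is in fact slightly more explicit than the paper's, which invokes Lemma 2.3 for the quartic term without spelling out the substitution and the collapse of the four terms via $C^T=C$.
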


\begin{proof}
According to Lemma 2.2,
\begin{align}
\left \Vert
A_{i}-B_{i}YY^{T}B_{i}^{T}\right \Vert_{F}^{2}
&  =tr[(A_{i}-B_{i}YY^{T}B_{i}^{T})^{T}(A_{i}-B_{i}YY^{T}B_{i}^{T})] \nonumber \\
&  =tr(B_{i}YY^{T}B_{i}^{T}B_{i}YY^{T}B_{i}^{T})-tr(B_{i}YY^{T}B_{i}^{T}A_{i})-tr(A_{i}^{T}B_{i}YY^{T}B_{i}^{T})+tr(A_{i}^{T}A_{i})  \label{s10}%
\end{align}
We get by (\ref{s10})
%\begin{align}
% f(Y) =\sum^{m}_{i=1}\left[tr(B_{i}YY^{T}B_{i}^{T}B_{i}YY^{T}B_{i}^{T})-tr(B_{i}YY^{T}B_{i}^{T}A_{i})-tr(A_{i}^{T}B_{i}YY^{T}B_{i}^{T})+tr(A_{i}^{T}A_{i}) \right ].  \label{s1}%
%\end{align}
\begin{align}
 f(Y) &  =\sum^{m}_{i=1}\left \Vert
A_{i}-B_{i}YY^{T}B_{i}^{T}\right \Vert^{2} \nonumber \\
&  =\sum^{m}_{i=1}[tr(B_{i}YY^{T}B_{i}^{T}B_{i}YY^{T}B_{i}^{T})-tr(B_{i}YY^{T}B_{i}^{T}A_{i})-tr(A_{i}^{T}B_{i}YY^{T}B_{i}^{T})+tr(A_{i}^{T}A_{i}) ]  \label{s1}%
\end{align}
By Lemma 2.3 and the equality (\ref{s1}), the gradient of $f(Y)$ can be expressed as
\begin{align}
\bigtriangledown f(Y)&=\frac{\partial}{\partial Y} f(Y)\nonumber \\
& =\sum^{m}_{i=1}[ \frac{\partial}{\partial Y} tr(B_{i}YY^{T}B_{i}^{T}B_{i}YY^{T}B_{i}^{T})-\frac{\partial}{\partial Y} tr(B_{i}YY^{T}B_{i}^{T}A_{i})-\frac{\partial}{\partial Y} tr(A_{i}^{T}B_{i}YY^{T}B_{i}^{T})+\frac{\partial}{\partial Y} tr(A_{i}^{T}A_{i}) ]\label{s1e}
\end{align}
By Lemma 2.2, note that
$$
tr(B_{i}YY^{T}B_{i}^{T}B_{i}YY^{T}B_{i}^{T})=tr(B_{i}^{T}B_{i}YY^{T}B_{i}^{T}B_{i}Y Y^{T})=tr(Y^{T}B_{i}^{T}B_{i}YY^{T}B_{i}^{T}B_{i}Y)
$$
$$tr(B_{i}YY^{T}B_{i}^{T}A_{i})=tr(B_{i}^{T}A_{i}B_{i}YY^{T})=tr(Y^{T}B_{i}^{T}A_{i}B_{i}Y)$$
$$tr(A_{i}^{T}B_{i}YY^{T}B_{i}^{T})=tr(B_{i}^{T}A_{i}^{T}B_{i}YY^{T})=tr(Y^{T}B_{i}^{T}A_{i}^{T}B_{i}Y)$$
Applying Lemma 2.3, we obtain the following equalities
\begin{align}
\frac{\partial}{\partial Y} tr(B_{i}YY^{T}B_{i}^{T}B_{i}YY^{T}B_{i}^{T}) & =\frac{\partial}{\partial Y}tr(Y^{T}B_{i}^{T}B_{i}YY^{T}B_{i}^{T}B_{i}Y) \nonumber \\
& =4 B_{i}^{T} B_{i}YY^{T} B_{i}^{T}B_{i}Y   \label{s1a}
\end{align}
\begin{align}
\frac{\partial}{\partial Y} tr(B_{i}YY^{T}B_{i}^{T}A_{i})& =\frac{\partial}{\partial Y} tr(Y^{T}B_{i}^{T}A_{i}B_{i}Y)\nonumber \\
&=B_{i}^{T}A_{i}B_{i}Y+ B_{i}^{T}A_{i}^{T}B_{i}Y   \label{s1b}
\end{align}
\begin{align}
\frac{\partial}{\partial Y} tr(A_{i}^{T}B_{i}YY^{T}B_{i}^{T})& =\frac{\partial}{\partial Y}tr(Y^{T}B_{i}^{T}A_{i}^{T}B_{i}Y)\nonumber \\
& =B_{i}^{T}A_{i}^{T}B_{i}Y+ B_{i}^{T}A_{i}B_{i}Y    \label{s1c}
\end{align}
\begin{equation}
\frac{\partial}{\partial Y} tr(A_{i}^{T}A_{i})=0  \label{s1d}
\end{equation}
Substituting (\ref{s1a}), (\ref{s1b}), (\ref{s1c}), (\ref{s1d}) into (\ref{s1e}), we verify the equality (\ref{grad}) holds.
\end{proof}

In Theorem 2.4, we obtain the gradient of $f(Y)$. In order to solve the minimization problem (\ref{mfy}), we apply the nonlinear conjugate gradient method with exact line search. For the nonlinear conjugate gradient method, it can be refered to \cite{nc1},\cite{nc2}. The following is the algorithm of solving (\ref{mfy}).
\begin{algorithm}
$1$. Given matrices $A_{i}\in R^{m_{i}\times m_{i}}$, $B_{i}\in R^{m_{i}\times n}$, $i=1,2,\ldots,m$, initial matrix $Y_{0}\in R^{n\times k}$, and tolerant error $\varepsilon >0$;\\
$2$. Evaluate $f_{0}=f(Y_{0})$,$\bigtriangledown f_{0}=\bigtriangledown f(Y_0)$, $D_{0}=-\bigtriangledown f(Y_0)$, $k=0$;\\
$3$. When $\Vert\bigtriangledown f_{k}\Vert_{F} > \varepsilon $,
find $t_{k}$ such that \\
$$ \sum^{m}_{i=1}\left \Vert
A_{i}-B_{i}(Y_k+t_kD_k)(Y_k+t_kD_k)^{T}B_{i}^{T}\right \Vert_{F}^{2}=\underset{t>0 }{\min} \sum^{m}_{i=1}\left \Vert
A_{i}-B_{i}(Y_k+tD_k)(Y_k+tD_k)^{T}B_{i}^{T}\right \Vert_{F}^{2},$$
$Y_{k+1}=Y_{k}+t_{k}D_{k},$\\
$\bigtriangledown f_{k+1}=\bigtriangledown f(Y_{k+1}),$\\
$\beta_{k+1}=\frac{\bigtriangledown f_{k+1}^{2}}{\bigtriangledown f_{k}^{2}},$\\
$D_{k+1}=-\bigtriangledown f_{k+1}+\beta_{k+1}D_{k}$\\
end
\end{algorithm}

\begin{remark}
  Algorithm 2.5 is iplemented with exact line search for a step length $t_k$. We can use the exact line search method in \cite{line}, \cite{Duan2014} to compute the step length $t_k$, because the univariate function $\phi(\cdot)$ defined by
  $$\phi(t)=\sum^{m}_{i=1}\left \Vert
A_{i}-B_{i}(Y_k+tD_k)(Y_k+tD_k)^{T}B_{i}^{T}\right \Vert_{F}^{2}=a_{4}t^{4}+a_{3}t^{3}+a_{2}t^{2}+a_{1}t+a_{0}$$
where
\begin{align}
&a_{4}=\sum^{m}_{i=1}\left \Vert B_{i}D_{k}D_{k}^{T}B_{i}^{T}\right \Vert_{F}^{2},\nonumber \\
&a_{3}=\sum^{m}_{i=1} 2tr[B_{i}D_{k}D_{k}^{T}B_{i}^{T}ㄗY_{k}D_{k}^{T}+D_{k}Y_{k}^{T}ㄘ]\nonumber \\
&a_{2}=\sum^{m}_{i=1}-2tr[B_{i}D_{k}D_{k}^{T}B_{i}^{T}( A_{i}-B_{i}Y_{k}Y_{k}^{T}B_{i}^{T})]\nonumber \\
&a_{1}=\sum^{m}_{i=1}-2tr[(A_{i}-B_{i}YY_{k}^{T}B_{i}^{T})(Y_{k}D_{k}^{T}+D_{k}Y_{k}^{T})]\nonumber \\
&a_{0}=\sum^{m}_{i=1}\left \Vert A_{i}-B_{i}Y_{k}Y_{k}^{T}B_{i}^{T}\right \Vert_{F}^{2},\nonumber
\end{align}
is quadratic, which is similar as the metric function for Newton's method with line search for solving Algebraic Riccati equation in \cite{line}.
\end{remark}

\begin{remark} Note that
 $D_{k+1}$ in Algorithm 2.5 is a descent direction.  In fact, we know that the exact line search always satisfies the following equality
  $$tr[(\nabla f_{k+1})^{T}D_{k}]=[vec(f_{k+1})^{T}D_{k})^{T}vec(D){k})]=0$$
By using $vec(\cdot)$ to the equality $D_{k+1}=-\bigtriangledown f_{k+1}+\beta_{k+1}D_{k}$ and premultiplying by $[vec(\bigtriangledown f_{k+1})]^{T}$, we get
$$[vec(\bigtriangledown f_{k+1})]^{T}vec(D_{k+1})=-\|\bigtriangledown f_{k+1})\|_{F}^{2}+\beta_{k+1}vec(\bigtriangledown f_{k+1})]^{T}vec(D_{k}).$$
Hence, we obtain $[vec(\bigtriangledown f_{k+1})]^{T}vec(D_{k+1})<0,$ which implies that $D_{k+1}$ is a descent direction.

\end{remark}

\begin{theorem}
Suppose that $f$ is continuously differentiable and bounded below. If the gradient $\bigtriangledown f$ is lipschitz continuous, that is , there exists a
constant $L$ such that
$$\left \Vert
\bigtriangledown f(X)-\bigtriangledown f(Y)\right \Vert\leq L \left \Vert
 X- Y\right \Vert , \,\,\,\,\,\forall X,Y\in R^{n\times k},$$
Then the sequence ${Y_{k}}$ generated by Algorithm 2.1 satisfies
$$\lim_{n\rightarrow \infty} \inf\Vert\bigtriangledown f(Y_{k}) \Vert=0.$$
\end{theorem}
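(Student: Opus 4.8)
The plan is to follow the classical global convergence argument of Zoutendijk for the Fletcher--Reeves conjugate gradient method, transposed to the matrix setting where the ambient inner product is the Frobenius inner product $\langle U,V\rangle = tr(U^{T}V)$ and I abbreviate $g_{k}:=\nabla f(Y_{k})$. Observe that the update weight in the algorithm, $\beta_{k+1}=\|g_{k+1}\|_{F}^{2}/\|g_{k}\|_{F}^{2}$, is exactly the Fletcher--Reeves formula, and that Remark 2.7 already records the two facts I will lean on throughout: the exact--line--search orthogonality $tr(g_{k+1}^{T}D_{k})=0$ and the descent property $tr(g_{k+1}^{T}D_{k+1})<0$.

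First I would establish the Zoutendijk condition
$$\sum_{k\geq 0}\frac{tr(g_{k}^{T}D_{k})^{2}}{\|D_{k}\|_{F}^{2}}<\infty .$$
This uses all three standing hypotheses. For a step length obeying the Wolfe conditions, the curvature condition combined with $L$--Lipschitz continuity gives the lower bound $t_{k}\geq (1-c_{2})\bigl(-tr(g_{k}^{T}D_{k})\bigr)/(L\|D_{k}\|_{F}^{2})$; feeding this into the sufficient--decrease inequality and telescoping against the lower bound on $f$ yields the stated sum. Because the exact line search minimizes $\phi(t)=f(Y_{k}+tD_{k})$ over $t>0$, it achieves at least as much decrease as any Wolfe--acceptable step, so the same telescoping bound survives. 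Next, since $D_{k}=-g_{k}+\beta_{k}D_{k-1}$ and $tr(g_{k}^{T}D_{k-1})=0$, I get $tr(g_{k}^{T}D_{k})=-\|g_{k}\|_{F}^{2}$, which sharpens the Zoutendijk condition to
$$\sum_{k\geq 0}\frac{\|g_{k}\|_{F}^{4}}{\|D_{k}\|_{F}^{2}}<\infty .$$

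The central computation is a closed recurrence for $\|D_{k}\|_{F}^{2}$. Squaring $D_{k+1}=-g_{k+1}+\beta_{k+1}D_{k}$ in the Frobenius norm kills the cross term by orthogonality, leaving $\|D_{k+1}\|_{F}^{2}=\|g_{k+1}\|_{F}^{2}+\beta_{k+1}^{2}\|D_{k}\|_{F}^{2}$. Dividing by $\|g_{k+1}\|_{F}^{4}$ and inserting the Fletcher--Reeves value of $\beta_{k+1}$ collapses this to
$$\frac{\|D_{k+1}\|_{F}^{2}}{\|g_{k+1}\|_{F}^{4}}=\frac{1}{\|g_{k+1}\|_{F}^{2}}+\frac{\|D_{k}\|_{F}^{2}}{\|g_{k}\|_{F}^{4}} ,$$
and since $D_{0}=-g_{0}$ starts the recursion at $\|D_{0}\|_{F}^{2}/\|g_{0}\|_{F}^{4}=1/\|g_{0}\|_{F}^{2}$, unrolling gives $\|D_{k}\|_{F}^{2}/\|g_{k}\|_{F}^{4}=\sum_{j=0}^{k}\|g_{j}\|_{F}^{-2}$. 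Now I argue by contradiction: if $\liminf_{k}\|g_{k}\|_{F}\neq 0$, then $\|g_{k}\|_{F}\geq\gamma>0$ for all $k$, so $\|D_{k}\|_{F}^{2}/\|g_{k}\|_{F}^{4}\leq (k+1)/\gamma^{2}$, hence $\|g_{k}\|_{F}^{4}/\|D_{k}\|_{F}^{2}\geq\gamma^{2}/(k+1)$, and the sharpened Zoutendijk series dominates the divergent $\gamma^{2}\sum_{k}1/(k+1)$, a contradiction. Therefore $\liminf_{k}\|\nabla f(Y_{k})\|_{F}=0$.

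The hard part is genuinely the first step. The Zoutendijk condition is the only place where the analytic hypotheses enter, and the delicate point is justifying that the exact line search inherits the sufficient--decrease estimate proved for Wolfe steps (which in turn needs the existence of a Wolfe point, guaranteed under boundedness below and continuity). Everything after that is algebra driven by the Fletcher--Reeves weight and the exact--search orthogonality; the only side check is that the denominators $\|g_{k}\|_{F}$ never vanish, and this holds because the algorithm halts as soon as $\|g_{k}\|_{F}\leq\varepsilon$, so every iterate under consideration has $g_{k}\neq 0$.
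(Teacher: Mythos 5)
Your proof is correct, but there is nothing in the paper to compare it against: the paper states this theorem and then proceeds directly to the numerical section, offering no proof at all and evidently deferring to the nonlinear conjugate gradient literature it cites (\cite{nc1}, \cite{nc2}). What you have written is essentially the standard Al-Baali/Zoutendijk global convergence argument for Fletcher--Reeves under exact line search, transposed to the Frobenius inner product, and every step checks out: the exact-line-search orthogonality $tr(g_{k+1}^{T}D_{k})=0$ kills the cross term, giving both $tr(g_{k}^{T}D_{k})=-\Vert g_{k}\Vert_{F}^{2}$ and the recursion $\Vert D_{k+1}\Vert_{F}^{2}=\Vert g_{k+1}\Vert_{F}^{2}+\beta_{k+1}^{2}\Vert D_{k}\Vert_{F}^{2}$, whose unrolled form $\Vert D_{k}\Vert_{F}^{2}/\Vert g_{k}\Vert_{F}^{4}=\sum_{j=0}^{k}\Vert g_{j}\Vert_{F}^{-2}$ turns the sharpened Zoutendijk sum into a comparison with the divergent harmonic series. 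Two refinements are worth making. First, your detour through Wolfe points for the Zoutendijk condition is legitimate (a Wolfe-acceptable step exists since $f$ is bounded below, continuously differentiable, and $tr(g_{k}^{T}D_{k})<0$, and the exact minimizer decreases $f$ at least as much), but it is more roundabout than necessary: the $L$-Lipschitz gradient gives the descent lemma $f(Y_{k}+tD_{k})\leq f(Y_{k})+t\,tr(g_{k}^{T}D_{k})+\tfrac{L}{2}t^{2}\Vert D_{k}\Vert_{F}^{2}$ for all $t>0$, and minimizing the right-hand side in $t$ shows directly that the exact step decreases $f$ by at least $tr(g_{k}^{T}D_{k})^{2}/(2L\Vert D_{k}\Vert_{F}^{2})$, so telescoping yields Zoutendijk with no reference to Wolfe conditions at all. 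Second, your closing remark about nonvanishing denominators conflates two regimes: if the algorithm is run with tolerance $\varepsilon>0$ it terminates after finitely many steps and the liminf statement is vacuous, so the theorem must be read with $\varepsilon=0$ and the convention that the claim holds trivially if some $g_{k}=0$; you should also record the one-line justification that the exact step is interior ($\phi'(0)=tr(g_{k}^{T}D_{k})<0$ and $\phi\geq 0$ is a quartic, hence a stationary minimizer $t_{k}>0$ exists), since the orthogonality identity you use everywhere is exactly the first-order condition $\phi'(t_{k})=0$. Finally, note that you silently and correctly repaired two typos in the statement itself: ``Algorithm 2.1'' should be Algorithm 2.5, and the displayed limit should read $\liminf_{k\rightarrow\infty}\Vert\bigtriangledown f(Y_{k})\Vert=0$.
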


\section{\textbf{A numerical example}}

In this section, we use some numerical examples to illustrate the Algorithm 2.5 is feasible and effective to solve Problem 1. All tests are performed by using $Matlab$ $R2016a$. We denote the relative residual error
\begin{align}
\varepsilon(k)=\frac{\sum\nolimits^m_{i=1}\|A_{i}-B_{i}Y_{k}Y_{k}^{T}B_{i}^{T}\|_{F}}{\sum\nolimits^m_{i=1}\|A_{i}\|_F}.\nonumber
\end{align}
\begin{flushleft}
and the gradient norm
\end{flushleft}
\begin{center}$\|g_{k}\|_F=\|\nabla f(Y_{k})\|_F$,\end{center}
where $Y_{k}$ is the $t$th iterative matrix of Algorithm. We use the stopping criterion
\begin{center}$\|g_{k}\|_F<1.0\times10^{-4}$.\end{center}
And the initial value $Y_{0}$ is randomly generated by the \emph{rand} function in MATLAB.

\begin{example}
Consider problem 1 with $m=2$ and
\begin{align*}
A_{1}  &  =\left[
\begin{array}
[c]{cccc}%
0.6938 & 0.1093 & 0.0503 & 0.8637\\
0.9452 & 0.3899 & 0.2287 & 0.0781\\
0.7842 & 0.5909 & 0.8342 & 0.6690\\
0.7056 & 0.4594 & 0.0156 & 0.5002
\end{array}
\right]  \text{, }\\
A_{2}  &  =\left[
\begin{array}
[c]{ccccc}%
0.2180 & 0.5996 & 0.0196 & 0.5201\\
0.5716 & 0.0560 & 0.4352 & 0.8639 \\
0.1222 & 0.0563 & 0.8322 & 0.0977 \\
0.6712 & 0.1523 & 0.6174 & 0.9081
\end{array}
\right]  \text{, }\\
B_{1}  &  =\left[
\begin{array}
[c]{ccccc}%
0.1080 & 0.0046 & 0.9870 & 0.5078\\
0.5170 & 0.7667 & 0.5051 & 0.5856 \\
0.1432 & 0.8487 & 0.2714 & 0.7629 \\
0.5594 & 0.9168 & 0.1008 & 0.0830
\end{array}
\right]  \text{, }\\
B_{2}  &  =\left[
\begin{array}
[c]{ccccc}%
0.6616 & 0.5905 & 0.4519 & 0.6801\\
0.5170 & 0.4406 & 0.8397 & 0.3672 \\
0.1710 & 0.9419 & 0.5326 & 0.2393 \\
0.9386 & 0.6559 & 0.5539 & 0.5789
\end{array}
\right]  \text{. }
\end{align*}
Case I: Set $k=2$. We use Algorithm 2.5 with the initial value
\begin{align*}
Y_{0}  &  =\left[
\begin{array}
[c]{cccc}%
0.8669 & 0.3002 \\
0.4068 & 0.4014 \\
0.1126 & 0.8334 \\
0.4438 & 0.4036
\end{array}
\right]
\end{align*}
to solve problem (2.2). And we get the solution $\hat{Y}$ of problem (2.2)
\begin{align*}
\hat{Y}\approx Y_{242}  &  =\left[
\begin{array}
[c]{cccc}%
0.7015 & -1.0397 \\
-0.7793 & 0.3971 \\
-0.5158 & -0.2175 \\
-0.1875 & -0.0296
\end{array}
\right] \text{. }
\end{align*}
Hence, the solution $\hat{X}$ of problem 1 is
\begin{align*}
\hat{X}=\hat{Y}\hat{Y}^T  &  =\left[
\begin{array}
[c]{cccc}%
1.5731 & -0.9596 & -0.1357 & 0.1008 \\
-0.9596 & 0.7651 & 0.3156 & 0.1344 \\
-0.1357 & 0.3156 & 0.3133 & 0.1032 \\
-0.1008 & 0.1344 & 0.1032 & 0.0360
\end{array}
\right] \text{. }
\end{align*}
And the curves of the relative residual error $\varepsilon(k)$ and the gradient norm $\|\nabla f(Y_{k})\|_{F}$ are in Fig.1.
\begin{figure}
\centering
\includegraphics[height=5cm,width=13cm]{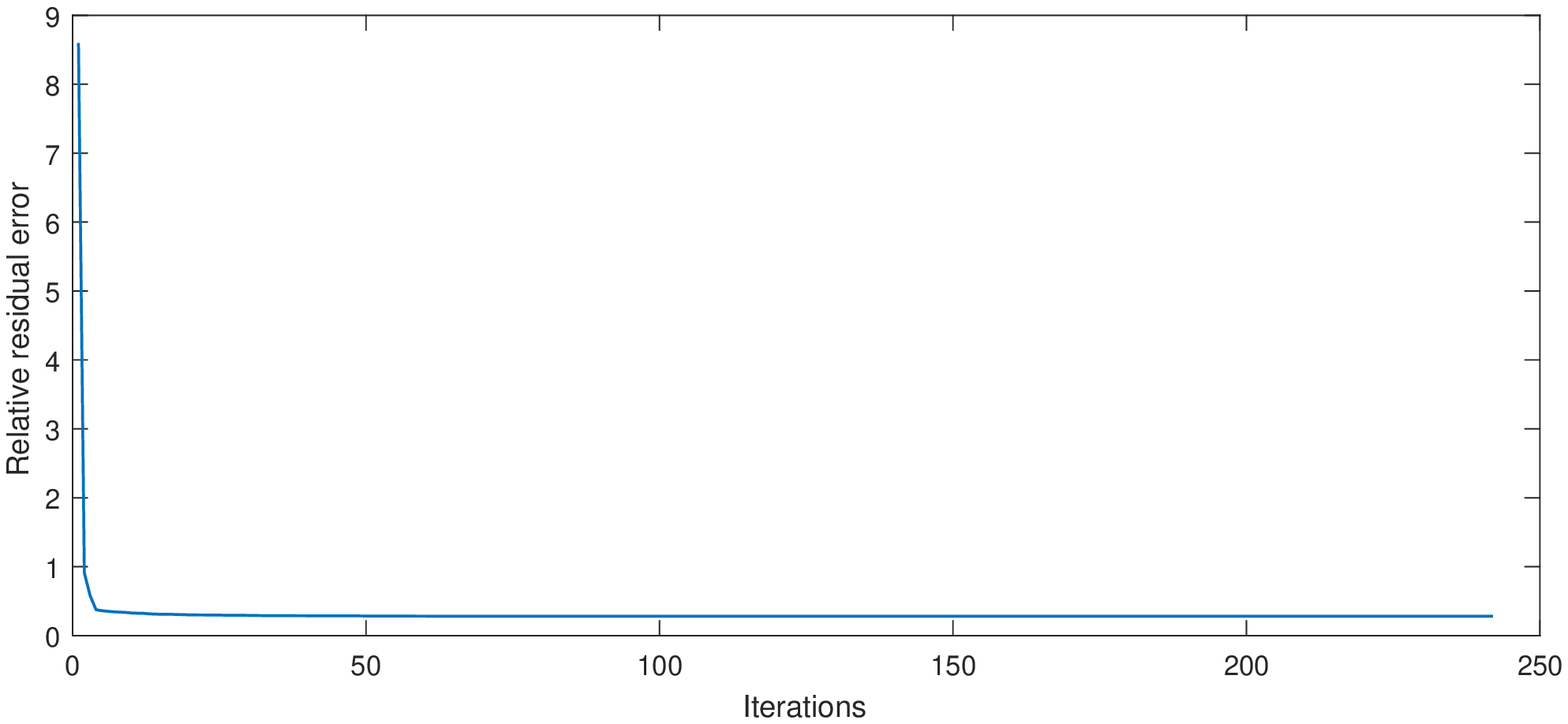}
\end{figure}
\begin{figure}
\centering
\includegraphics[height=5cm,width=13cm]{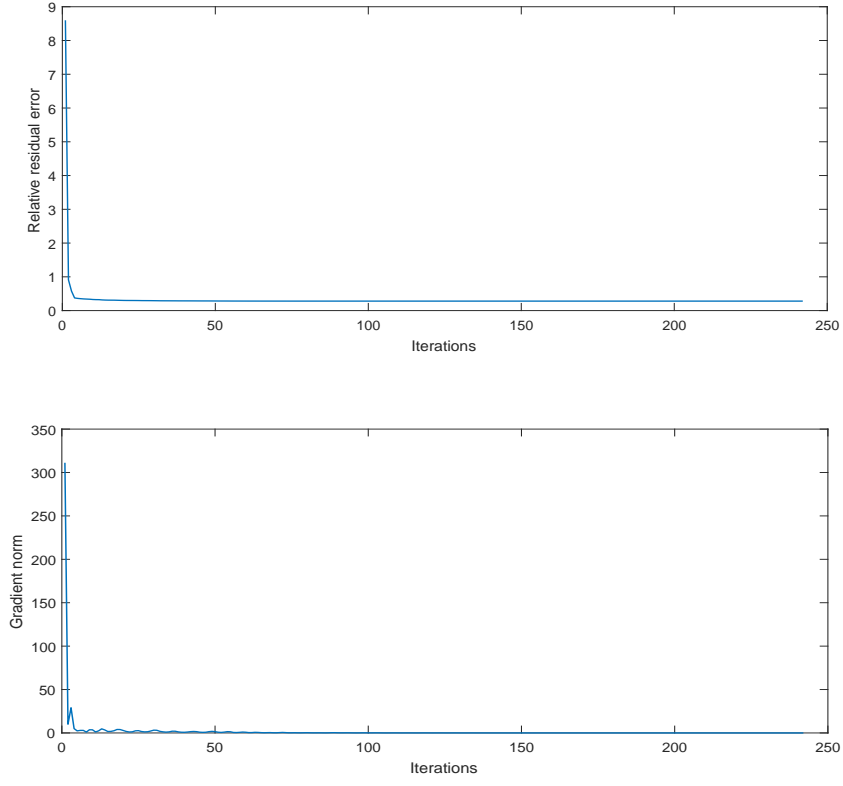}
\caption{Convergence curves of the relative residual error $\varepsilon(k)$ and the gradient norm $\|\nabla f(Y_{k})\|_{F}$}
\label{1}
\end{figure}

Case II: Set $k=3$. We use Algorithm 2.5 with the initial value
\begin{align*}
Y_{0}  &  =\left[
\begin{array}
[c]{cccc}%
0.5211 & 0.6791 & 0.0377 \\
0.2316 & 0.3955 & 0.8852 \\
0.4889 & 0.3674 & 0.9133 \\
0.6241 & 0.9880 & 0.7962
\end{array}
\right]\text{. }
\end{align*}
to solve problem (2.2). And we get the solution $\hat{Y}$ of problem(2.2)
\begin{align*}
\hat{Y}\approx Y_{224} &  =\left[
\begin{array}
[c]{cccc}%
0.4617 & 0.4371 & -1.0811 \\
-0.4498 & -0.5971 & 0.4541 \\
-0.2575 & -0.4660 & -0.1729 \\
-0.0979 & -0.1620 & -0.0141
\end{array}
\right]\text{. }
\end{align*}
Hence, the solution $\hat{X}$ of problem 1 is
\begin{align*}
\hat{X}=\hat{Y}\hat{Y}^T  &  =\left[
\begin{array}
[c]{cccc}%
1.5731 & -0.9596 & -0.1357 & 0.1008 \\
-0.9596 & 0.7650 & 0.3156 & 0.1344 \\
-0.1357 & 0.3156 & 0.3133 & 0.1032 \\
-0.1008 & 0.1344 & 0.1032 & 0.0360
\end{array}
\right]
\end{align*}
And the curves of the relative residual error $\varepsilon(k)$ and the gradient norm $\|\nabla f(Y_{k})\|_{F}$ are in Fig.2.
\end{example}

Example 3.1 shows that Algorithm 2.5 is feasible to solve problem 1.
\begin{figure}
\centering
\includegraphics[height=5cm,width=13cm]{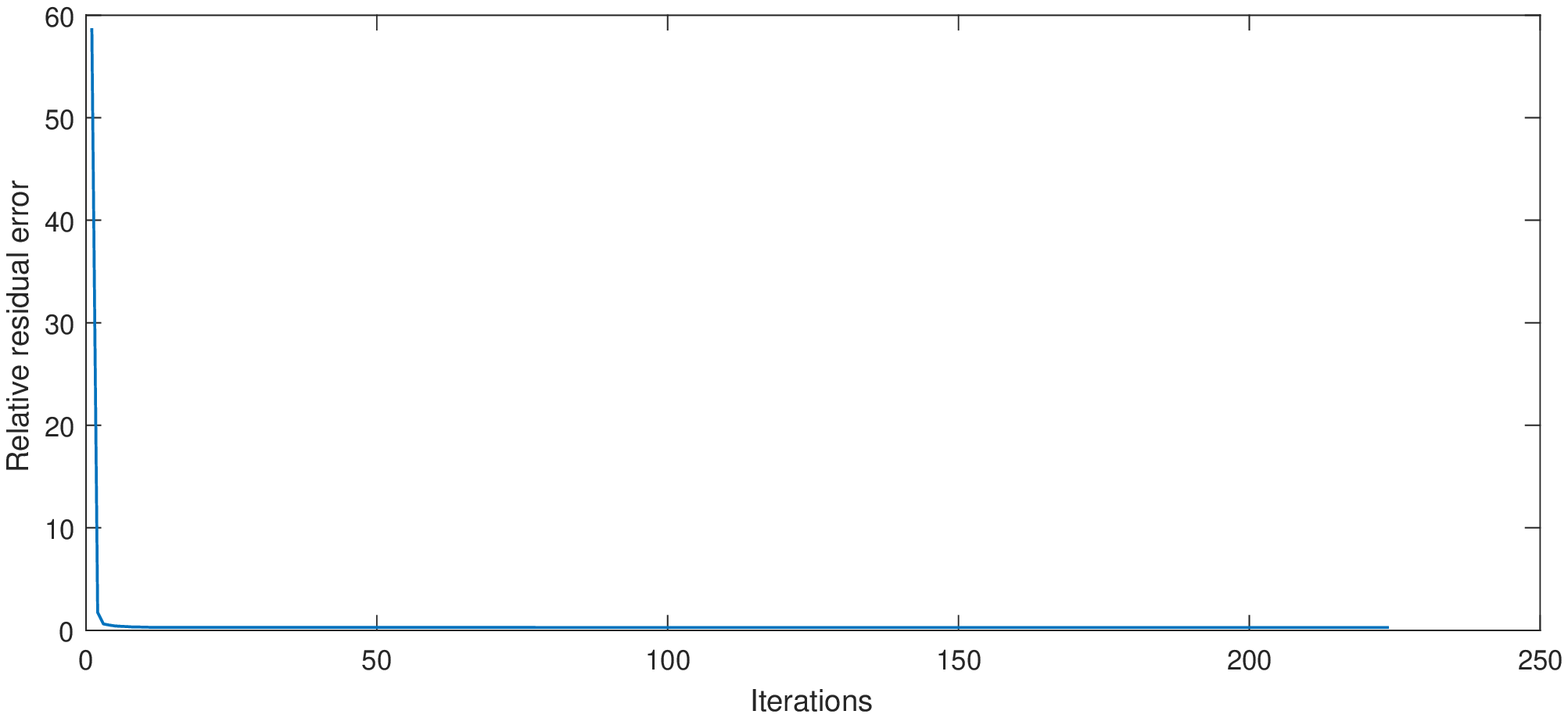}
\end{figure}
\begin{figure}
\centering
\includegraphics[height=5cm,width=13cm]{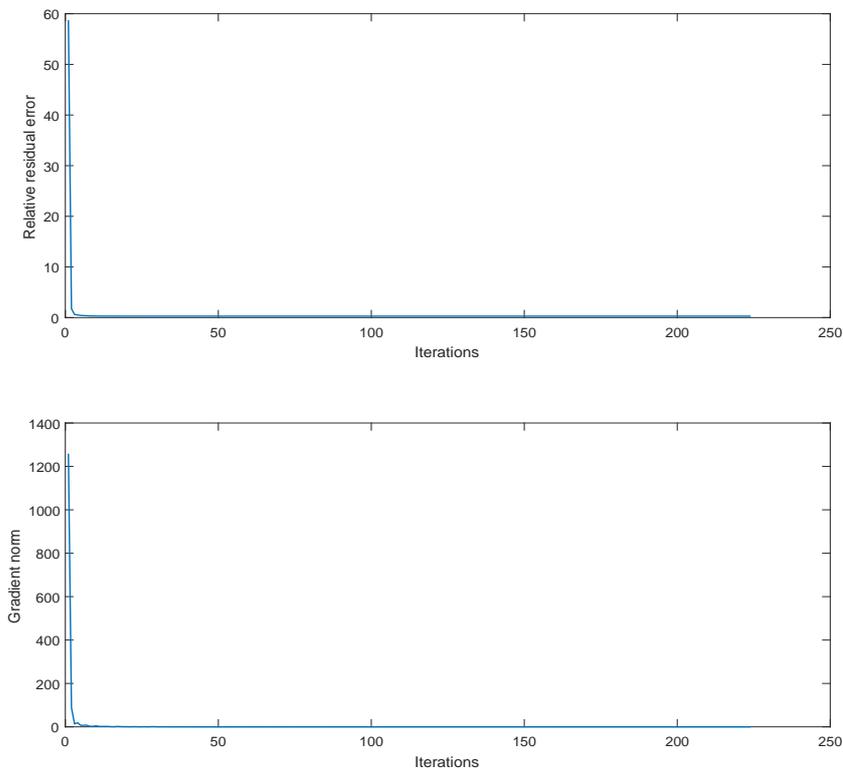}
\caption{Convergence curves of the relative residual error $\varepsilon(t)$ and the gradient norm $\|\nabla f(Y_{t})\|_{F}$}
\label{1}
\end{figure}

\begin{example}
There are three observed positive semidefinited images $A_{1},A_{2},A_{3}$, which are generated from image $A$ under three different transformations. By using Algorithm 2.5, we find the approximation matrix $X$ such that $\underset{ rank(X)\leq k}{\min} \sum^{m}_{i=1}\left \Vert
A_{i}-P_{i}XP_{i}^{T}\right \Vert^{2}_{F}$ ,
\end{example}

\section{\textbf{Conclusion}}

We in this paper have solved the generalized low rank approximation of a symmetric positive semidefinite matrix.  We convert the generalized low rank approximation into unconstraint generalized optimization problem.
We apply the nonlinear conjugate gradient method with exact line search to solve the generalized optimization problem. The numerical examples show that the algorithm is feasible.

\end{document}